\newcommand{\ipref}{\ensuremath{\mathcal{IP}}}
\newcommand{\supp}{\ensuremath{\text{supp}}}
\newcommand{\upr}{\ensuremath{a}}
\newcommand{\ilen}{\ensuremath{\ell_{\text{inv}}}}
\newcommand{\rank}{\ensuremath{\text{rank}}}
\theoremstyle{plain}
\newtheorem{thm}{Theorem}[section]
\newtheorem{lemma}[thm]{Lemma}        
\newtheorem{prop}[thm]{Proposition}
\newtheorem{defn}[thm]{Definition}
\newtheorem{property}[thm]{Property}
\newtheorem{conj}[thm]{Conjecture}
\begin{document}
 \title{\textbf{A  note on involution prefixes in Coxeter groups}}
    \date{}  \author{Sarah B. Hart\thanks{School of Computing and Mathematical Sciences,
    		Birkbeck, University of London, Malet Street, London WC1E 7HX, United Kingdom, s.hart@bbk.ac.uk}$\;$ and Peter J. Rowley\thanks{School of Mathematics, The University of Manchester, Oxford Road, Manchester M13 9PL, United Kingdom, peter.j.rowley@manchester.ac.uk}}
  \maketitle

\begin{abstract} Let $(W, R)$ be a Coxeter system and let $w \in W$. We say that $u$ is a {\em prefix} of $w$ if there is a reduced expression for $u$ that can be extended to one for $w$. That is, $w = uv$ for some $v$ in $W$ such that $\ell(w) = \ell(u) + \ell(v)$. We say that $w$ has the {\em ancestor property} if the set of prefixes of $w$ contains a unique involution of maximal length. In this paper we show that all Coxeter elements of finitely generated Coxeter groups have the ancestor property, and hence a canonical expression as a product of involutions. We  conjecture that the property in fact holds for all non-identity elements of finite Coxeter groups.  
	(MSC2000: 20F55)
\end{abstract}

\section{Introduction}

Involutions play an important role in the study of Coxeter groups. By definition, any Coxeter group $W$ is generated by a distinguished set $R$ of involutions, known as the simple reflections. Involutions in Coxeter groups are well understood: conjugates of simple reflections are called reflections, and any
involution can be expressed as a canonical product of reflections
(see \cite{deodhar} and \cite{springer}). The involution conjugacy classes can be determined from the Coxeter graph with a simple algorithm due to Richardson
\cite{richardson}. Every element of a Coxeter group $W$ can be written as a product of involutions (just take any expression for it as a product of simple reflections). But if we work with the full set of involutions, expressions with fewer terms may be possible. For example, if $W$ is finite, every element of $W$ is a product of at most two involutions \cite[Lemma~5 and Theorem~6]{carter}. These two extremes behave markedly differently with respect to the length function. Recall that in a Coxeter system $(W, R)$, the {\em length} of an element $w$ of $W$, written $\ell(w)$, is the minimal length of any expression for $w$ as a product of simple reflections (by convention, the identity element has length zero). Such minimal length expressions are called reduced expressions. Any reduced expression $w = r_1r_2\cdots r_n$ (where $r_i \in R$ for each $i$) by definition has the property that the length of $w$ is the sum of the lengths of the involutions in this product. Expressions for $w$ as a product of two involutions do not typically have this additive length property. That is, it cannot be guaranteed that involutions $u_1$ and $u_2$ can be chosen such that $w = u_1u_2$ and $\ell(w) = \ell(u_1) + \ell(u_2)$. (The minimum attainable value of $\ell(u_1) + \ell(u_2) - \ell(w)$ is called the \textit{excess} of $w$. Among other things, it has been shown (see \cite{invprod}) that every element of a finite Coxeter group is conjugate to an element with excess zero.) There may be many expressions for $w$ as a product of two or more involutions which do have the additive length property. A natural question is whether one can write each $w$ as a product of involutions $u_1$, $\ldots$, $u_k$ such that $\ell(w) = \ell(u_1) + \cdots + \ell(u_k)$, in some canonical way -- that is, unique subject to a simple criterion. In this paper we present a conjecture that indeed this can be done, and prove it in the special case of Coxeter elements.     

Let $W$ be a Coxeter group with $R$ the set of simple reflections, and let $w\in W$. We say that $u$ is a {\em prefix} of $w$ if there are $r_1, \ldots, r_n \in R$, and $0\leq k\leq n$ such that $r_1\cdots r_k$ is a reduced expression for $u$ and $r_1\cdots r_n$ is a reduced expression for $w$. We denote by $\ipref(w)$ the set of all prefixes of $w$ which are involutions. That is,  $$\ipref(w) = \{u \in W: u {\text{ is a prefix of $w$ and $o(u) = 2$}}\},$$
and we call the elements of $\ipref(w)$ the {\em involution prefixes} of $w$. We further define the set $A(w)$ of {\em ancestors} of $w$ to be the set of involution prefixes of $w$ whose length is maximal. That is, $$A(w) = \bigg\{u \in \ipref(w): \ell(u) = \max_{v\in \ipref(w)}\{\ell(v)\}\bigg\}.$$
\begin{property} \label{property1} Let $W$ be a Coxeter group and $w$ a non-identity element of $W$. We say that $w$ has the {\em ancestor property} if $|A(w)| = 1$. In this case, we call the unique element of $A(w)$ the {\em ancestor} of $w$, and denote it $\upr(w)$. We say that $W$ has the ancestor property if every non-identity element of $W$ has this property.
	\end{property}

\begin{conj}[The Ancestor Conjecture]
	\label{conj1} Every finite Coxeter group has the ancestor property. 
\end{conj}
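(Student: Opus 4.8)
\section{Towards the Ancestor Conjecture}

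I would begin with two easy reductions. First, if $W = W_1\times W_2$ with simple system $R_1\sqcup R_2$, then a reduced expression for $w=(w_1,w_2)$ is exactly a shuffle of a reduced expression for $w_1$ with one for $w_2$; hence $u=(u_1,u_2)$ is a prefix of $w$ if and only if each $u_i$ is a prefix of $w_i$, lengths add, and $u$ is an involution if and only if each $u_i$ is an involution or the identity (and not both the identity). With the convention $A(e)=\{e\}$ this gives $|A(w)|=|A(w_1)|\cdot|A(w_2)|$, so the Ancestor Conjecture reduces to the irreducible finite Coxeter groups $A_n$, $B_n$, $D_n$, $E_6$, $E_7$, $E_8$, $F_4$, $H_3$, $H_4$ and $I_2(m)$. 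Second, if $w$ is itself an involution then $w\in\ipref(w)$, and as every other prefix of $w$ is strictly shorter we get $A(w)=\{w\}$ immediately; so from now on $w$ is not an involution.

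The core of the argument would be an induction on $\ell(w)$ carried out through the right weak order, in which $u$ is a prefix of $w$ precisely when $u\le_R w$. If $w$ is not an involution then every prefix of $w$ is already a prefix of $wr$ for some right descent $r$ of $w$, so
\[ \ipref(w)=\bigcup_{r\in D(w)}\ipref(wr), \]
where $D(w)$ denotes the set of right descents; each $wr$ is nonidentity (since $\ell(w)\ge 2$) and strictly shorter than $w$, so by induction has a well-defined ancestor $\upr(wr)$. Writing $m=\max_{r\in D(w)}\ell(\upr(wr))$, one checks that $A(w)=\{\upr(wr):r\in D(w),\ \ell(\upr(wr))=m\}$, so $|A(w)|=1$ is equivalent to the \emph{confluence} statement: if $r,r'\in D(w)$ satisfy $\ell(\upr(wr))=\ell(\upr(wr'))=m$, then $\upr(wr)=\upr(wr')$.

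To attack confluence I would pass to the combinatorial models of the classical families. Prefixes are controlled by inversions: $u\le_R w$ exactly when the inversion set of $u$ is contained in that of $w$, and such inversion sets are precisely the biconvex sets of reflections; the further requirement that $u$ be an involution becomes a symmetry condition on its inversion set. For $A_n$ (permutations), and for $B_n$ and $D_n$ (signed permutations), I would seek an explicit description of $\upr(w)$ — building a maximal biconvex, symmetric subset of the inversion set of $w$ by a greedy rule in the spirit of Richardson's algorithm \cite{richardson} — and then show the rule makes no genuine choices, which delivers maximality, uniqueness, and hence confluence in one go. The dihedral groups $I_2(m)$ are a short direct verification, and $E_6$, $E_7$, $E_8$, $F_4$, $H_3$, $H_4$ can be dealt with by a finite computation.

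The step I expect to be the real obstacle is the uniqueness half of this classical analysis, equivalently a uniform proof of confluence. The trouble is that the two constraints on the inversion set of $u$ — biconvexity (which is what makes $u$ an actual prefix) and the involution symmetry — do not cooperate, so a maximal set meeting both need not be reachable by an obvious greedy rule without branching, and excluding two genuinely different maximal-length involution prefixes seems to demand delicate, type-by-type combinatorics. This difficulty is absent for a Coxeter element $c$: the set of prefixes of $c$ has a transparent ``heap'' model — the reduced expressions of $c$ are the linear extensions of a poset on the simple reflections appearing in $c$, and the prefixes are the order ideals of that poset, an ideal giving an involution exactly when the corresponding reflections pairwise commute — and among those ideals one can single out a unique largest one, which is what makes the Coxeter-element case tractable.
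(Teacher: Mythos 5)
The statement you are addressing is stated in the paper only as a conjecture: the authors prove it for Coxeter elements (Proposition~\ref{one}), report machine verification for the irreducible groups of order less than $100{,}000$, and leave the general case open. Your proposal, by your own admission, also stops short of a proof. The entire argument funnels into the ``confluence'' statement --- that if $r,r'$ are right descents of $w$ with $\ell(\upr(wr))=\ell(\upr(wr'))$ maximal then $\upr(wr)=\upr(wr')$ --- and you offer no proof of it, only the hope that a greedy construction of a maximal biconvex, symmetric subset of the inversion set ``makes no genuine choices.'' That is precisely the point where a proof is needed and where branching can in principle occur: the family of inversion sets of involution prefixes of $w$ is not closed under unions (biconvexity and the involution symmetry interact badly, as you note), so there is no a priori reason for a unique maximal element. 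This is a genuine gap, and it is exactly the gap the paper itself leaves open.

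The reductions you do carry out are correct and worth keeping. The product decomposition $A(w)=A(w_1)\times A(w_2)$ for reducible groups matches the (unproved but asserted) reduction in Section~\ref{sec3}; the observation that $A(w)=\{w\}$ for involutions is immediate; and the recursion $\ipref(w)=\bigcup_{r}\ipref(wr)$ over right descents $r$ of a non-involution $w$ is valid, since every proper prefix $u$ of $w$ is a prefix of $wr$ for $r$ the last letter of a reduced word for $u^{-1}w$. (Beware that the paper's $D(w)$ denotes \emph{left} descents, whereas your recursion needs right descents.) Your closing remark about Coxeter elements --- prefixes are order ideals of the heap, an ideal is an involution exactly when its elements pairwise commute, and there is a unique largest such ideal --- is in substance the paper's Lemma~\ref{descentscommute} and Proposition~\ref{one}, where that largest ideal is identified as $\prod_{r\in D(w)}r$, the product of the left descents; even there the uniqueness requires the small argument that every commutative ideal sits inside the set of minimal elements of the heap, since commutative ideals are not closed under union either. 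In short: the scaffolding is sound, but the load-bearing step is missing, so this does not establish the conjecture.
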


There is a connection between Conjecture~\ref{conj1} and the weak (left) Bruhat order. Conjecture~\ref{conj1} is equivalent to saying that every interval $[1,w]$ in the weak Bruhat order (with $w\in W\setminus \{1\}$) contains a unique highest involution.
 
Suppose $W$ has the ancestor property. Then every non-identity element of $W$ has a unique decomposition as a product of ancestors: if $w$ is an involution, then $\upr(w) = w$; otherwise, inductively, $w = \upr_1\cdots \upr_k$ for some $k > 1$, where for $1\leq i < k$ we have $\upr_i = \upr(\upr_{i}\cdots \upr_{k})$. We call this expression for $w$ the {\em ancestor decomposition} of $w$, and define the {\em involution length} of $w$, or $\ilen(w)$, to be the number $k$ of involutions making up the expression, with the convention that $\ilen(1) = 0$.

\begin{conj}\label{conj2}
Suppose Conjecture~\ref{conj1} holds, and let $W$ be a finite Coxeter group. For each $w \in W$, the involution length of $w$ is at most the rank of $W$. 
\end{conj}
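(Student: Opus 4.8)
The plan is to derive Conjecture~\ref{conj2} from the sharper assertion that $\ilen(w)\le|\supp(w)|$ for every $w\in W\setminus\{1\}$. This suffices since $\supp(w)\subseteq R$, and it also removes any need to treat reducible $W$ separately. I would argue by induction on $|\supp(w)|$. If $|\supp(w)|=1$ then $w\in R$ is an involution, so $\ilen(w)=1$. Otherwise write $w=\upr(w)v$ with $v=\upr(w)^{-1}w$; then $v$ is a suffix of $w$, so $\supp(v)\subseteq\supp(w)$, and $\ilen(w)=1+\ilen(v)$. Thus everything reduces to the following claim, which is the crux: \emph{if $w$ is not an involution, then $\supp(\upr(w)^{-1}w)\subsetneq\supp(w)$.} Granting this, $|\supp(v)|\le|\supp(w)|-1$, so $\ilen(v)\le|\supp(v)|\le|\supp(w)|-1$ by induction, whence $\ilen(w)\le|\supp(w)|$.

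For the crux, I would first reduce to the case that $W$ is irreducible and $w$ has full support: the prefixes of $w$ computed in $W$ are exactly those computed in the parabolic $W_{\supp(w)}$ (every prefix of $w$ lies in $W_{\supp(w)}$ and has the same inversion set there), so $\upr(w)$ and the whole ancestor decomposition are intrinsic to $W_{\supp(w)}$; moreover if $W_{\supp(w)}$ is a direct product then some component of a non-involution $w$ is a non-involution, to which the irreducible case applies. So suppose $W$ is irreducible, $\supp(w)=R$, and --- for a contradiction --- $\supp(\upr(w)^{-1}w)=R$ as well; set $u=\upr(w)$, $v=u^{-1}w$. The idea is to exhibit an involution prefix of $w$ of length greater than $\ell(u)$, contradicting the maximality of $u$ in $\ipref(w)$. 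One easy observation orients the search: for every $s$ in the left descent set of $v$ the element $us$ is a prefix of $w$ of length $\ell(u)+1$ (if $v=sv'$ with $\ell(v')=\ell(v)-1$ then $w=(us)v'$ and $\ell(us)+\ell(v')=\ell(w)$); since $u$ is already a longest involution prefix, $us$ is not an involution, which forces $s$ not to commute with $u$, i.e. $u(\alpha_s)\ne\pm\alpha_s$. The aim is to exploit this --- together with connectedness of the Coxeter diagram and the fact that $v$ meets every node --- to move $u$ up to a longer involution prefix, using for instance the standard fact that when $s$ fails to commute with an involution $u$ and $\ell(us)=\ell(u)+1$, the conjugate $sus$ is again an involution, of length $\ell(u)+2$.

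The main obstacle is precisely to ensure that such a longer involution can be chosen to lie in the weak-order interval $[1,w]$: elements like $sus$, or partial products $us_1\cdots s_j$ that happen to square to $1$, are plainly involutions, but lying in $[1,w]$ is a condition on inversion sets which the additivity relation $N(w^{-1})=N(u^{-1})\sqcup u(N(v^{-1}))$ couples with the simple roots of $\supp(v)=R$ in a way I expect to be delicate to control uniformly. If a type-free argument proves elusive, a fallback is to establish the claim type by type, using the classification of finite Coxeter groups after the reductions above (together with a bound on $\ell(w)$).

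Finally, the bound is sharp, and in an instance already covered by the main theorem of this paper: in type $A_n$ the Coxeter element $c=s_1s_2\cdots s_n$ (with $s_1,\ldots,s_n$ labelled along the path) has a unique reduced expression, so its only involution prefix of positive length is $s_1$; since $s_1^{-1}c=s_2\cdots s_n$ is the corresponding Coxeter element of the type $A_{n-1}$ parabolic, induction gives $\ilen(c)=n=\rank(W)$.
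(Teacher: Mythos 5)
The statement you are trying to prove is labelled a \emph{conjecture} in the paper: the authors offer no proof of it, only machine verification for the irreducible groups of order below $100{,}000$ and a proof in the special case of Coxeter elements (where it is immediate from $\ilen(w)\le\ell(w)=\rank(W)$). So there is no proof in the paper to compare yours against; the only question is whether your argument actually closes the conjecture, and it does not.

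Your framework is sound as far as it goes: the induction on $|\supp(w)|$, the identity $\ilen(w)=1+\ilen(v)$ for $v=\upr(w)^{-1}w$, the inclusion $\supp(v)\subseteq\supp(w)$, the reduction to the irreducible full-support case, and the observation that $us$ is a non-involution prefix for every $s\in D(v)$ (hence $s$ does not commute with $u$) are all correct. The sharpness example in type $\mathrm{A}_n$ is also right and agrees with Proposition~\ref{path}. But everything hinges on the crux claim that $\supp(\upr(w)^{-1}w)\subsetneq\supp(w)$ whenever $w$ is not an involution, and you do not prove it --- you explicitly flag that you cannot control membership of the candidate longer involutions ($sus$, or products $us_1\cdots s_j$) in the weak-order interval $[1,w]$. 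That is not a technical loose end; it is the entire content of the problem. Without some strictly decreasing quantity of the kind the crux would supply, the ancestor decomposition is a priori only bounded in the number of its factors by $\ell(w)$, which can far exceed $\rank(W)$. Your crux is at least as strong as Conjecture~\ref{conj2} itself (it yields $\ilen(w)\le|\supp(w)|$), so reducing the conjecture to it, without a proof or even a verified mechanism for producing the required longer involution prefix, leaves the statement exactly as open as before. The proposed fallback of a type-by-type case analysis is likewise only a plan, not an argument. In short: correct scaffolding, but the load-bearing step is missing, and the statement remains a conjecture.
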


In Section \ref{sec2} we show that if $W$ is a finitely generated Coxeter group, then every Coxeter element of $W$ has the ancestor property. We note that the length of any Coxeter element $w$ of $W$ equals the rank of $W$. It is clear that $\ilen(w) \leq \ell(w)$, so it follows immediately that in this special case, the involution length of any Coxeter element is at most the rank of $W$. We further show in Lemma \ref{chromatic} that the minimum involution length possible for a Coxeter element of $W$ is the chromatic number of the Coxeter graph of $W$. At the other extreme, Proposition \ref{path} shows that the maximum possible involution length is the maximum path length of the Coxeter graph. 

In Section \ref{sec3} we summarise calculations verifying the conjectures for some finite Coxeter groups, and discuss what happens in infinite Coxeter groups.

\section{Coxeter elements}\label{sec2}

Throughout this section let $(W,R)$ be a Coxeter system group of finite rank $n$. 
The {\em (left) descent set} of $w$ in $W$ is the set $D(w) = \{r\in R: \ell(rw) < \ell(w)\}$.
Equivalently, $D(w)$ is the set of simple reflections which can appear at the start of a reduced expression for $w$, so that $D(w) = \ipref(w)\cap R$.
A {\em Coxeter element} of $W$ is a product of all the simple reflections in some order, with each $r\in R$ appearing exactly once. All Coxeter elements have the same order, known as the {\em Coxeter number} of the group, and when $W$ is finite they are all conjugate \cite[\S 3.16]{humphreys}.  
Recall that the {\em support} of an element $w$ of $W$, denoted $\supp(w)$ is the set of simple reflections that appear in a reduced expression for $w$. (This is independent of the choice of reduced expression, by \cite[\S 5.10]{humphreys}.) 

\begin{lemma}\label{descentscommute} 
Let $w$ be a Coxeter element of $W$. Then all elements of $D(w)$ commute. Hence, $\prod_{r\in D(w)}r$ is both well-defined and an involution. 
\end{lemma}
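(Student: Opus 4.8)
The plan is to reduce everything to the single claim that any two distinct elements of $D(w)$ commute; given that, the remaining assertions follow formally. The key observation I would isolate first is a rigidity property of reduced expressions of a Coxeter element. By the word property of Coxeter groups (Matsumoto's theorem; see, e.g., \cite{humphreys}), any two reduced expressions for $w$ are related by a sequence of braid moves $\underbrace{aba\cdots}_{m(a,b)} \mapsto \underbrace{bab\cdots}_{m(a,b)}$ with $a,b\in R$. Since $w$ is a Coxeter element, every reduced expression for $w$ contains each simple reflection exactly once, so a braid move with $m(a,b)\ge 3$ can never be applied (it would force $a$, say, to occur at least twice). Hence any two reduced expressions for $w$ differ by a sequence of \emph{commutations}: transpositions of two adjacent letters $a,b$ with $m(a,b)=2$.

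Now take $r,s\in D(w)$ with $r\neq s$. By definition of the descent set there is a reduced expression $\mathbf{w}_1$ for $w$ beginning with $r$ and a reduced expression $\mathbf{w}_2$ for $w$ beginning with $s$; since these are reduced expressions for a Coxeter element, in $\mathbf{w}_1$ the letter $r$ precedes the letter $s$, while in $\mathbf{w}_2$ the letter $s$ precedes the letter $r$. Passing from $\mathbf{w}_1$ to $\mathbf{w}_2$ through commutations, the relative order of the two distinguished letters $r$ and $s$ must change at some step; but a commutation only alters the relative order of the two adjacent letters it transposes, so at that step $r$ and $s$ must be adjacent and get swapped, which is a legitimate move only if $m(r,s)=2$. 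Therefore $rs=sr$. (An alternative route: the hypothesis $r,s\in D(w)$ forces the longest element $w_{\{r,s\}}$ of the finite dihedral parabolic $W_{\{r,s\}}$ to be a prefix of $w$, and since $w_{\{r,s\}}$ has reduced expression $\underbrace{rsr\cdots}_{m(r,s)}$, which repeats a generator as soon as $m(r,s)\ge 3$, the same conclusion $m(r,s)=2$ follows.)

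It remains to deduce the displayed consequences. Because the elements of $D(w)$ pairwise commute, the product $\prod_{r\in D(w)} r$ is independent of the order in which the factors are taken, so it is well-defined; and writing $D(w)=\{r_1,\dots,r_k\}$ we get $\big(\prod_{i=1}^{k} r_i\big)^2=\prod_{i=1}^{k} r_i^2=1$ using commutativity. Finally this element is not the identity: $w$ is a Coxeter element of a group of rank $n$, so $\ell(w)=n\ge 1$ and $D(w)\neq\emptyset$, i.e.\ $k\ge 1$; and $r_1\cdots r_k=1$ would give $r_1=(r_2\cdots r_k)^{-1}\in W_{\{r_2,\dots,r_k\}}$, contradicting the standard fact that a standard parabolic subgroup meets $R$ precisely in its generating set \cite[\S5.5]{humphreys}. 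Hence $\prod_{r\in D(w)} r$ has order $2$.

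I do not anticipate a genuine obstacle here: the entire force of the lemma lies in the first paragraph's observation that reduced expressions of a Coxeter element admit only commutation moves, after which the existence of two non-commuting descents is immediately excluded. The only points requiring a little care are the bookkeeping that a commutation changes only the relative order of the pair it swaps, and the brief verification that the resulting product of distinct commuting reflections is nontrivial.
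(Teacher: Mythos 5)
Your proposal is correct and takes essentially the same route as the paper: both proofs hinge on the observation that, since each generator occurs exactly once in any reduced expression for a Coxeter element, only commutation moves can relate two reduced expressions, whence any two descents of $w$ must commute. The differences are cosmetic (the paper notes that a descent commutes with everything to its left, while you track the order flip of the pair $r,s$; you also spell out the nontriviality of the product, which the paper leaves implicit).
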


\begin{proof}
	It is well known that all reduced expressions for a given element of
	a Coxeter group are obtainable from each other by repeated
	applications of braid relations (that is, interchanging a string
	$rsrs\cdots$ of length $m$ for a string $srsr\ldots$ of length $m$
	for some $r, s \in R$ with $(rs)^m = 1$). In a Coxeter element all
	reduced expressions contain exactly one occurrence of each $r \in
	R$. Hence the only relations which could be used are instances where
	$rs = sr$. Let $r_1\cdots r_n$ be any reduced expression for $w$, and suppose $r_i \in D(w)$. Then there is a reduced expression for $w$ that begins with $r_i$, and it is obtainable from $r_1\cdots r_n$ by braid relations of the form $rs = sr$. Hence, $r_i$ commutes with $r_1$ (and indeed with all elements to its left in any reduced expression for $w$). If $s$ is any other element of $D(w)$, then there is at least one reduced expression for $w$ that begins with $s$, and thus $r$ commutes with $s$. This means that $D(w)$ is a collection of mutually commuting simple reflections; their product in any order gives the same element of $W$, and so $\prod_{r\in D(w)}r$ is indeed both well-defined and an involution. 
\end{proof}

\begin{prop} \label{one} Let $W$ be a finitely generated Coxeter group. Each
	Coxeter element $w$ of $W$ has the ancestor property, and $\upr(w) = \prod_{r\in D(w)}r$. 
\end{prop}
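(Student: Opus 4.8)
The plan is to show that $\prod_{r \in D(w)} r$ is the unique ancestor of $w$ by combining two facts: first, that this product is itself an involution prefix of $w$ (so it is a lower bound for the maximal length of an involution prefix); and second, that \emph{every} involution prefix of $w$ is a subword of it, hence has length at most $|D(w)|$, with equality forcing it to equal the product. Lemma~\ref{descentscommute} already gives that $x := \prod_{r \in D(w)} r$ is a well-defined involution. To see $x \in \ipref(w)$, I would use the fact (established in the proof of Lemma~\ref{descentscommute}) that the elements of $D(w)$ pairwise commute and each can be brought to the front of a reduced expression for $w$ by commutations; since $w$ is a Coxeter element containing each generator exactly once, one can successively peel off the elements of $D(w)$ and obtain a reduced expression for $w$ beginning with the elements of $D(w)$ in any chosen order, so $r_1 \cdots r_k$ with $\{r_1,\dots,r_k\} = D(w)$ is a reduced prefix of $w$; this word represents $x$, so $x$ is a prefix of $w$ of length $|D(w)|$.

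Next I would bound an arbitrary involution prefix $u$ of $w$ from above. The key structural observation is that because $w$ is a Coxeter element, any reduced expression of $w$ uses each simple reflection exactly once, so any prefix $u$ of $w$ has a reduced expression in which each generator appears at most once; in particular $\supp(u) \subseteq R$ with $u$ expressible as a product of $|\supp(u)| = \ell(u)$ distinct simple reflections. Now I claim $\supp(u) \subseteq D(w)$. Indeed, if $u = r_1 \cdots r_k$ is the start of a reduced expression $r_1 \cdots r_n$ for $w$, then $r_1 \in D(w)$ and, since $u$ is an involution, reversing gives $r_k \cdots r_1$ as another reduced expression for $u$; one can then argue (using that all the $r_i$ are distinct and that reduced expressions are connected by commutations in a Coxeter element) that each $r_i$ can be moved to the front, so $r_i \in D(w)$ for all $i$. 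Hence $\supp(u) \subseteq D(w)$, giving $\ell(u) = |\supp(u)| \le |D(w)| = \ell(x)$.

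Finally, to pin down uniqueness, suppose $u \in A(w)$, so $\ell(u) = |D(w)|$ and $\supp(u) = D(w)$. Since $u$ is an involution that is a product of the $|D(w)|$ distinct commuting reflections in $D(w)$, and the subgroup generated by $D(w)$ is an elementary abelian $2$-group in which the product of all generators is a single well-defined element, we get $u = \prod_{r \in D(w)} r = x$. Therefore $A(w) = \{x\}$ and $\upr(w) = \prod_{r \in D(w)} r$.

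The main obstacle I anticipate is the step $\supp(u) \subseteq D(w)$: one must carefully exploit that in a Coxeter element every reduced expression is a permutation of $R$ connected to the others only by commutation moves, to argue that being an involution prefix forces each generator in the support of $u$ to actually be a left descent of $w$. The rest is essentially bookkeeping with the elementary abelian subgroup generated by commuting reflections.
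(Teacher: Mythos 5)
Your proof is correct and follows essentially the same route as the paper: show $x=\prod_{r\in D(w)}r$ is an involution prefix, then show any involution prefix is a product of distinct pairwise-commuting simple reflections whose support lies in $D(w)$, forcing $\ell(u)\leq\ell(x)$ with equality only for $u=x$. The only cosmetic difference is that the paper deduces the commutativity of $\supp(u)$ by viewing $u$ as a Coxeter element of an edgeless standard parabolic, whereas you use the reversal $u=u^{-1}$ together with the fact that reduced expressions of Coxeter elements are linked by commutation moves; you also spell out why $x$ is a prefix, which the paper merely asserts.
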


\begin{proof} Write $x = \prod_{r\in D(w)} r$. Then $x$ is an involution prefix of $w$. Suppose $y$ is another involution prefix of $w$. Then $y$ is a Coxeter element of a standard parabolic subgroup of $W$. Since $y$ is an involution, the corresponding Coxeter graph of this subgroup is edgeless, and hence $y$ is a product of mutually commuting simple reflections. Thus, $D(y) = \supp(y)$. Since $D(y)\subseteq D(w)$, we have that $\ell(y)\leq \ell(x)$, with equality if and only if $y=x$. Therefore $x = \upr(w)$, as required.
\end{proof}

By Proposition \ref{one}, every Coxeter element of $W$ has a unique ancestor decomposition, and hence the involution length is well defined for Coxeter elements of finitely generated Coxeter groups. Since the ancestor decomposition is a reduced expression, it is clear that for a Coxeter element $w$ we have $\ilen(w) \leq \ell(w) = \rank(W)$. Thus, Conjecture \ref{conj2} is trivially true for such elements. But we are able to give considerably more detail about which involution lengths are possible for Coxeter elements of a given group, and also a criterion to determine the involution length of a given Coxeter element. 

The Coxeter graph of $W$ is denoted $\Gamma(W)$, or just $\Gamma$ if the choice of group is clear.

\begin{lemma}\label{chromatic}
	The  minimum involution length attainable by a Coxeter element of $W$ is the chromatic number of its Coxeter graph.
\end{lemma}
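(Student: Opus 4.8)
The plan is to prove the two inequalities separately: every Coxeter element of $W$ has involution length at least $\chi(\Gamma)$, and some Coxeter element attains $\chi(\Gamma)$. Two facts will be used repeatedly. First, a product of distinct simple reflections, each occurring exactly once, is reduced --- indeed such an element is just a Coxeter element of the standard parabolic subgroup it generates. Second, as already observed in the proof of Lemma~\ref{descentscommute}, any reduced expression for a Coxeter element is obtained from any other purely by commutations $rs = sr$.

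For the lower bound, let $w$ be any Coxeter element and $w = a_1 a_2 \cdots a_k$ its ancestor decomposition, so $\ilen(w) = k$. This is a reduced expression for $w$; concatenating reduced expressions for the $a_i$ gives one for $w$ in which every simple reflection occurs exactly once, so the supports $\supp(a_1), \dots, \supp(a_k)$ partition $R$. Moreover, for each $i$ the element $v_i := a_i a_{i+1}\cdots a_k$ equals $(a_1\cdots a_{i-1})^{-1}w$, so a reduced expression for it occurs as a terminal segment of one for $w$; hence $v_i$ is a Coxeter element of the parabolic $W_{\supp(v_i)}$, and by Proposition~\ref{one} (applied inside that parabolic) $a_i = \upr(v_i) = \prod_{r\in D(v_i)}r$. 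By Lemma~\ref{descentscommute}, $D(v_i)$ is a set of mutually commuting simple reflections, i.e.\ $\supp(a_i)$ is an independent set of $\Gamma$. Thus $\supp(a_1),\dots,\supp(a_k)$ is a proper $k$-colouring of $\Gamma$, whence $k\ge\chi(\Gamma)$.

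For the upper bound I would build an explicit Coxeter element from a carefully chosen optimal colouring. Run the greedy colouring algorithm on a vertex ordering that lists the colour classes of some optimal colouring one after another: this uses at most --- and hence exactly --- $c := \chi(\Gamma)$ colours, and every greedy colouring has the property (P) that each vertex of colour $j$ has a neighbour of every colour $i < j$. Let $S_1,\dots,S_c$ be the resulting (nonempty) classes, put $b_i = \prod_{r\in S_i}r$, and set $w = b_1 b_2\cdots b_c$; by the first fact above $w$ is a Coxeter element. I claim $\ilen(w) = c$, by induction on $c$. Since any reduced expression for $w$ arising from $b_1\cdots b_c$ uses only commutations, a simple reflection $s$ of colour $j$ lies in $D(w)$ exactly when $s$ commutes with every simple reflection of smaller colour; by (P) this forces $j = 1$, so $D(w) = S_1$ and $\upr(w) = b_1$. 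Then $\upr(w)^{-1}w = b_2\cdots b_c$ is the analogous element for the parabolic $W_{S_2\cup\cdots\cup S_c}$ together with the colouring $S_2,\dots,S_c$, which still satisfies (P); by induction $\ilen(b_2\cdots b_c) = c-1$, so $\ilen(w) = 1 + (c-1) = c$. Combined with the lower bound, the minimum involution length over Coxeter elements of $W$ is exactly $\chi(\Gamma)$.

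The hard part is the choice of colouring in the upper bound. For an arbitrary optimal colouring, $D(w)$ can strictly contain $S_1$ --- for instance, an isolated vertex of $\Gamma$ placed in a class other than the first is always a descent --- and then the recursion collapses. Passing to a greedy colouring, and checking that property (P) is inherited by the induced subgraph on $S_2\cup\cdots\cup S_c$, is what makes the induction run. A minor technical point to keep explicit is that the ancestors and involution length of $b_2\cdots b_c$ may be computed inside the parabolic $W_{S_2\cup\cdots\cup S_c}$, since these notions depend only on the element's reduced expressions.
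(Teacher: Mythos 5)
Your proof is correct and follows essentially the same route as the paper: the lower bound via independence of the ancestor supports (the paper cites Proposition~\ref{one} directly where you unwind it through Lemma~\ref{descentscommute}), and the upper bound by building a Coxeter element whose descent set is exactly the first colour class and recursing. The only cosmetic difference is that you arrange this with a greedy colouring and property (P), whereas the paper enlarges the first colour class to a maximal independent set at each stage of the induction; both devices serve the identical purpose of guaranteeing every later vertex has a non-commuting neighbour in the first class.
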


\begin{proof}
	Let $w$ be a Coxeter element of $W$, of involution length $k$, and let $\upr_1\cdots \upr_k$ be its ancestor decomposition. By Proposition~\ref{one}, $\supp(\upr_i)$ is an independent vertex set of $\Gamma$. Thus the ancestor decomposition induces a $k$-colouring of $\Gamma$, and $\chi(\Gamma) \leq \ilen(w)$. For the reverse implication we proceed by induction on $\chi(\Gamma)$. If $\chi(\Gamma) = 1$, then every Coxeter element is an involution, so has involution length 1. Suppose $\chi(\Gamma)=k>1$. Let $R = R_1\cup R_2\cup \cdots\cup R_k$ be a partition of the vertex set $R$ of $\Gamma$ into $k$ independent sets. By moving additional independent vertices to $R_1$ if necessary, we may assume $R_1$ is maximal by inclusion. Let $R_1 = \{r_1, \ldots, r_m\}$ and let $w$ be any Coxeter element of the form $r_1\cdots r_m\widehat w$, where $\widehat w$ is an arbitrary Coxeter element of the standard parabolic subgroup $\widehat{W}$ generated by $R\setminus R_1$, whose Coxeter graph $\widehat\Gamma$ has chromatic number $k-1$. Clearly $R_1 \subseteq D(w)$. Moreover, since $R_1$ is maximal by inclusion, no element $r$ of $R\setminus R_1$ commutes with every element of $R_1$. Therefore, $D(w) = R_1$ and hence $\upr(w) = r_1\cdots r_m$. Inductively, $\ilen(\widehat w) = \chi(\widehat \Gamma) = k-1$. Hence $\ilen(w) = k = \chi(\Gamma)$, as required.  
\end{proof}

\begin{defn}
	Let $w$ be a Coxeter element of $W$. We call $r_{1}, \ldots, r_{m}$ a $w$-path if $r_{1}, \ldots, r_{m}$ is a path in $\Gamma$ and there is a reduced expression for $w$ in which $r_i$ appears to the left of $r_{i+1}$ for each $1\leq i < m$.
		The {\em path length of $w$} is the maximum length of any $w$-path. 	
\end{defn}

Suppose that $r_{1}, \ldots, r_{m}$ is a $w$-path, so that there is a reduced expression for $w$ in which $r_i$ appears to the left of $r_{i+1}$ for $1\leq i<m$. The only braid relations that can be used in Coxeter elements are of the form $rs=sr$, and thus the order of appearance of any two simple reflections in any expression for $w$ can only change if they commute. But $r_i$ is joined by an edge to $r_{i+1}$ in $\Gamma$, so this does not happen, meaning that $r_{i}$ must appear to the left of $r_{i+1}$ in every reduced expression for $w$. We may therefore, without ambiguity, refer to $w$-paths without having to specify a particular reduced expression. 

\begin{prop}\label{path} 
Let $w$ be a Coxeter element of a finitely generated Coxeter group $W$. Then $\ilen(w) = {\text{ path length}}(w)$. Hence, the maximum involution length attainable by a Coxeter element of $W$ is the maximum path length of its Coxeter graph. 
\end{prop}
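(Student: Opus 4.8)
The plan is to prove the two inequalities $\ilen(w) \le {\text{path length}}(w)$ and ${\text{path length}}(w) \le \ilen(w)$ separately, in both cases working with the ancestor decomposition $w = \upr_1\cdots \upr_k$ (where $k = \ilen(w)$), regarded as the reduced expression obtained by concatenating reduced expressions for the $\upr_i$. By Proposition~\ref{one} each $\supp(\upr_i)$ is an independent vertex set of $\Gamma$, and since $\supp(w) = R$ these sets partition $R$; write $c(r)$ for the index of the block containing a given $r \in R$.

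For the inequality ${\text{path length}}(w) \le \ilen(w)$, I would take a $w$-path $r_1,\ldots,r_m$ and show that $c(r_1) < c(r_2) < \cdots < c(r_m)$, which gives $m \le k$. Consecutive $r_i,r_{i+1}$ are joined by an edge of $\Gamma$, so they cannot lie in a common independent set $\supp(\upr_j)$, ruling out $c(r_i) = c(r_{i+1})$; and by the remark following the definition of a $w$-path, $r_i$ precedes $r_{i+1}$ in \emph{every} reduced expression for $w$, in particular in the ancestor decomposition, which forces $c(r_i) \le c(r_{i+1})$ (otherwise $r_i$, lying in a later block, would appear to the right of $r_{i+1}$). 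Hence $c$ is strictly increasing along the path and $m \le k$.

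The reverse inequality requires exhibiting a $w$-path with $k$ reflections, and the crux of the argument — the step I expect to be the main obstacle — is the following claim: \emph{for every $j$ with $2 \le j \le k$ and every $r \in \supp(\upr_j)$, there is some $r' \in \supp(\upr_{j-1})$ adjacent to $r$ in $\Gamma$.} To prove it, observe that $\upr_{j-1}\upr_j\cdots\upr_k$ is a consecutive sub-expression of the reduced expression for $w$ containing each generator in $\supp(\upr_{j-1})\cup\cdots\cup\supp(\upr_k)$ exactly once, hence is a reduced expression for a Coxeter element of the standard parabolic subgroup on that generating set, whose descent set is exactly $\supp(\upr_{j-1})$ by Proposition~\ref{one}. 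Now $r$ lies in the support but not the descent set of this Coxeter element, so by the characterisation used in the proof of Lemma~\ref{descentscommute} (a generator lies in the descent set iff it commutes with every generator to its left in a reduced expression), some generator appearing to the left of $r$ in $\upr_{j-1}\upr_j\cdots\upr_k$ fails to commute with $r$, i.e.\ is adjacent to $r$ in $\Gamma$; it cannot lie in the independent set $\supp(\upr_j)$, so it lies in $\supp(\upr_{j-1})$, which proves the claim.

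Granting the claim, choose $s_k \in \supp(\upr_k)$ arbitrarily and then, descending, pick $s_{j-1} \in \supp(\upr_{j-1})$ adjacent to $s_j$ for $j = k, k-1, \ldots, 2$. The $s_1,\ldots,s_k$ lie in distinct blocks of the partition, so are pairwise distinct, and consecutive ones are adjacent, so $s_1,\ldots,s_k$ is a path in $\Gamma$; it is a $w$-path because in the ancestor decomposition $s_j$ lies in the $j$th block and so appears to the left of $s_{j+1}$. Hence ${\text{path length}}(w) \ge k = \ilen(w)$, and combined with the previous paragraph this gives $\ilen(w) = {\text{path length}}(w)$. Finally, for the ``hence'' statement: this equality shows $\ilen(w)$ is the length of a longest $w$-path, which is at most the maximum path length of $\Gamma$; conversely, given a longest path $t_1,\ldots,t_M$ in $\Gamma$, the $t_i$ are distinct, so there is a linear order on $R$ in which $t_1,\ldots,t_M$ occur in this order, and the corresponding Coxeter element $w$ has $t_1,\ldots,t_M$ as a $w$-path, whence $\ilen(w) = {\text{path length}}(w) \ge M$. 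Thus the maximum is attained.
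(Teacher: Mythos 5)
Your proof is correct, but it is organised differently from the paper's. The paper argues by induction on $\ilen(w)$: it shows that $\supp(\upr(w))$ is precisely the set of initial vertices of maximal $w$-paths, peels off $\upr_1$, and applies the inductive hypothesis to $w' = \upr_2\cdots \upr_k$; as a by-product it identifies $\supp(\upr_i)$ with the set of $i^{\text{th}}$ vertices of maximal $w$-paths. You instead prove the two inequalities separately: the block-index function $c$ is strictly increasing along any $w$-path (this is implicit in the paper's remark that adjacent vertices cannot lie in the same $\upr_j$), and a $w$-path of length exactly $k$ is produced by a backward greedy walk through the blocks, resting on your claim that every vertex of $\supp(\upr_j)$ has a neighbour in $\supp(\upr_{j-1})$. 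That claim is the same commutation mechanism the paper exploits --- a generator of a Coxeter element is a descent if and only if it commutes with everything to its left --- just run in the opposite direction, and your justification of it via Proposition~\ref{one} applied to the parabolic Coxeter element $\upr_{j-1}\cdots\upr_k$ is sound. What your version buys is that it avoids the induction and the slightly delicate ``precisely'' in the paper's characterisation of $\supp(\upr(w))$, at the cost of losing the finer structural conclusion about $i^{\text{th}}$ vertices of maximal paths; you also spell out the final ``hence'' clause (realising an arbitrary longest path of $\Gamma$ as a $w$-path of a suitably ordered Coxeter element), which the paper leaves implicit.
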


\begin{proof} We proceed by induction on $\ilen(w)$. If $\ilen(w) = 1$, then $w$ is an involution, which implies $\Gamma$ is edge-free and so the path length of $w$ is clearly 1. Hence $\ilen(w) = {\text{ path length}}(w)$. Suppose $\ilen(w) > 1$. Let the ancestor decomposition of $w$ be $\upr_1\cdots \upr_k$. 
Consider a maximal $w$-path $r_1$, \ldots, $r_m$ (that is, a $w$-path to which no further terms can be added to make a longer path). Then $r_{1}$ commutes with all terms to its left in $w$. Hence, $r_{1} \in D(w) = \supp(\upr(w))$.
Simple reflections corresponding to adjacent vertices of $\Gamma$ do  not commute, and therefore cannot lie in the same $\upr_j$ for any $j$. Thus, $r_2$, $\ldots$, $r_m$ lie in involutions strictly to the right of $r_1$, and so are not contained in $D(w)$. Thus $\supp(\upr(w))$ consists precisely of the initial vertices of maximal $w$-paths. Setting $w' = \upr_2\cdots \upr_k$, inductively $$\ilen(w) = \ilen(w')+1 =  {\text{ path length}}(w')+1 = {\text{ path length}}(w),$$ and in fact for each $i$ we see that $\upr_i$ consists precisely of the $i^{\text{th}}$ vertices of maximal $w$-paths. Hence, by induction, $\ilen(w) = {\text{ path length}}(w)$. It follows immediately that the maximum involution length of a Coxeter element of $W$ is the maximum path length of its Coxeter graph.  
\end{proof}

By Lemma \ref{chromatic}, there exist Coxeter elements of involution length 2 if and only if the Coxeter graph is bipartite (which is equivalent to it containing no odd cycles). In particular, as any tree or forest is bipartite, every finite Coxeter group except $\mathrm{A}_1^n$ contains Coxeter elements of involution length 2. At the other extreme, we have already observed that the involution length of any Coxeter element of $W$ cannot exceed the rank of $W$. By Proposition \ref{path}, a Coxeter group of rank $n$ will contain Coxeter elements of involution length $n$ if and only if its Coxeter graph has maximum path length $n$. The only trees with this property are paths. Therefore the only finite Coxeter groups $W$ which contain Coxeter elements of involution length $\rank(W)$ are those whose Coxeter graph is a path -- that is, types $\mathrm{A}_n$, $\mathrm{B}_n$, $\mathrm{F}_4$, $\mathrm{H}_3$, $\mathrm{H}_4$, and $\mathrm{I}_2(m)$. 

\section{Conclusion}\label{sec3}

Conjectures \ref{conj1} and \ref{conj2} are clearly true for dihedral groups. For all other irreducible Coxeter groups of order less than 100,000 (namely types $\mathrm{A}_n$ for $n\leq 7$, $\mathrm{B}_n$ and $\mathrm{D}_n$ for $n\leq 6$, $\mathrm{F}_4$, $\mathrm{E}_6$, $\mathrm{H}_3$, and $\mathrm{H}_4$), Conjectures \ref{conj1} and \ref{conj2} have been verified using Magma \cite{magma}. It follows immediately from the definition that if Conjecture \ref{conj1} holds for finite irreducible Coxeter groups, then it holds for finite reducible Coxeter groups (and the same is true for Conjecture \ref{conj2}). Therefore, it is a corollary of the computer calculations just referred to that both conjectures hold for all Coxeter groups of order less than 100,000.

For infinite Coxeter groups, Conjecture \ref{conj2} is false in general, even if the group has the ancestor property. For example, in the Coxeter group $G_n := \langle r_1, \ldots, r_n | r_1^2 = \cdots = r_n^2 = 1\rangle$, 
there is a unique reduced expression for every element. Therefore, any $w$ in $G_n$ has exactly one prefix of length $i$, for $0 \leq i \leq \ell(w)$. Hence $G_n$ has the ancestor property. But Conjecture~\ref{conj2} is false: for every positive integer $k$, the element $(r_1\cdots r_n)^k$ of $G_n$ has involution length $nk$. 

Note that we could make all of these definitions for suffixes rather than prefixes,  resulting in different decompositions and potentially different involution
lengths. For example in $\mathrm{A}_6$ with the usual labelling of
fundamental reflections, the element $r_6r_3r_2r_1r_4r_5$ has
the `suffix ancestor' decomposition $(r_3)(r_2r_4r_6)(r_1r_5)$ and
ancestor decomposition $(r_3r_6)(r_2r_4)(r_1r_5)$. We suspect that the  involution length of any element always equals its `suffix' involution length, but have not tested this extensively enough to make a firm conjecture. However, if Conjectures \ref{conj1} and \ref{conj2} hold for prefixes, then they also hold for suffixes, because the involution suffixes of any element $w$ are the involution prefixes of $w^{-1}$.

{}

\end{document}